\theoremstyle{definition}
\newtheorem{definition}{Definition}[section]
\theoremstyle{plain}
\newtheorem{theorem}[definition]{Theorem}
\newtheorem{proposition}[definition]{Proposition}
\newtheorem{lemma}[definition]{Lemma}
\newtheorem{corollary}[definition]{Corollary}
\newtheorem{example}[definition]{Example}
\theoremstyle{remark}
\newtheorem{remark}[definition]{Remark}
\numberwithin{equation}{section}
\begin{document}

\title{Remarks on the Second Homology Groups of Queer Lie Superalgebras}
\author{Yongjie Wang ${}^1$ and Zhihua Chang${}^2$\footnote{Corresponding Author: Zhihua Chang, Email: mazhhchang@scut.edu.cn}}
\maketitle

\begin{center}
\footnotesize
\begin{itemize}
\item[1] School of Mathematics, Hefei University of Technology, Hefei, Anhui, 23009, China.
\item[2] School of Mathematics, South China University of Technology, Guangzhou, Guangdong, 510640, China.
\end{itemize}
\end{center}

\begin{abstract}
The aim of this note is to completely determine the second homology group of the special queer Lie superalgebra $\mathfrak{sq}_n(R)$ coordinatized by a unital associative superalgebra $R$, which will be achieved via an isomorphism between the special linear Lie superalgebra $\mathfrak{sl}_{n}(R\otimes Q_1)$ and the special queer Lie superalgebra $\mathfrak{sq}_n(R)$. 
\bigskip

\noindent\textit{MSC(2020):} 17B05, 19D55.
\bigskip

\noindent\textit{Keywords: queer Lie superalgebra, root-graded Lie superalgebra, cyclic homology group, central extension.} 
\end{abstract}

\section{Introduction}
\label{sec:intr}
The universal central extension of a Lie (super)algebra plays an important role in Lie theory. On one hand, an affine Kac-Moody Lie algebra over the field $\mathbb{C}$ can be realized as the universal central of a twisted loop algebra based on a finite-dimensional Lie algebra (extended by a derivation). The realizations of extended affine Lie algebras also involve the universal central extension of an infinite-dimensional Lie algebra such as a twisted multi-loop algebra or a matrix Lie algebra coordinatized by quantum tori. On the other hand, an explicit description of the kernel of the universal central extension provides us with the second homology group of this Lie (super)algebra with coefficients in the trivial module \cite{Neher2003}. From this point of view, C. Kassel and J. L. Loday studied in \cite{KasselLoday1982} the universal central extension of $\mathfrak{sl}_n(A)$ with $A$ a unital associative algebra and established an elegant isomorphism between the second homology group of $\mathfrak{sl}_n(A)$ for $n\geqslant5$ and the first cyclic homology group $\mathrm{HC}_1(A)$. This establishes a connection between the Lie theoretical object $\mathfrak{sl}_n(A)$ and the K-theoretical object $\mathrm{HC}_1(A)$ in non-commutative geometry. This phenomenon also appears in unitary Lie algebras \cite{Gao1996}, where an isomorphism between the second homology group of a unitary Lie algebra and the first skew-dihedral homology group of the corresponding associative algebra was established.

Inspired by these works, the universal central extension of Lie superalgebras have been studied extensively in recent decades. For a Lie superalgebra $\mathfrak{g}$ of classical type over a field of characteristic zero, K. Iohara and Y. Koga determined in \cite{IoharaKoga2001,IoharaKoga2005} the second homology group of $\mathfrak{g}\otimes R$ with $R$ a super-commutative unital associative superalgebra. In the case where $\mathfrak{g}$ is a Lie superalgebra with a non-degenerate invariant bilinear form and $R$ a super-commutative unital associative superalgebra, K. H. Neeb and M. Yousofzadeh in \cite{NY2020} also obtained the universal central extension of $\mathfrak{g}\otimes R$ by explicitly creating the corresponding $2$-cocycle with K\"{a}hler differentials. 

Matrix Lie superalgebras that are coordinatized by associative superalgebras have also attracted algebraists' attentions. The universal central extensions of $\mathfrak{sl}_n(R), n\geqslant3$ have been completely determined in \cite{ChenGuay2013}, which has been further generalized to the Lie superalgebra $\mathfrak{sl}_{m|n}(R)$, $m+n\geqslant3$ in \cite{ChenSun2015} and \cite{GL2017}. The authors addressed in \cite{ChangWang2016} and \cite{ChangChengWang2018} the case of an otho-symplectic Lie superalgebra and the case of a periplectic Lie superalgebra that are coordinatized by an associative superalgebra with a super anti-involution. These Lie superalgebras are Super analogy of unitary Lie algebra, thus it can help us to identify the second homology groups of these Lie superalgebras with the $\mathbb{Z}/{2\mathbb{Z}}$-graded version of the first dihedral and skew-dihedral homology group.

This short note aims to determine the second homology group of a special queer Lie superalgebra. It has been shown in \cite{MartinezZelmanov2003} that every $Q(n-1)$-graded  Lie superalgebra is centrally isogenous to the Steinberg Lie superalgebra $\mathbf{st}_{n}(S)$ for $n\geqslant4$, where $S$ is a unital associative superalgebra such that its odd part contains an element $\nu$ with $\nu^2=1$. We will show in Section~\ref{sec:sq} that such a Lie superalgebra can be characterized by the special queer Lie superalgebra $\mathfrak{sq}_n(R)$ coordinatized by an associative algebra $R$. Moreover, we observe that the special queer Lie superalgebra $\mathfrak{sq}_n(R)$ is indeed isomorphic to the special linear Lie superalgebra $\mathfrak{sl}_{n}(R\otimes Q_1)$, where $Q_1$ is the Clifford superalgebra of rank one (see \eqref{eq:Q1} below).

If $R$ is a super-commuative superalgebra over $\Bbbk$, where $\Bbbk$ is a commutative base ring with $2$ invertible, then $\mathfrak{sq}_n(R)$ is isomorphic to $\mathfrak{sq}_n(\Bbbk)\otimes R$. Their universal central extensions have been determined in \cite{IoharaKoga2005}, \cite{MikhalevPinchuk2005} and \cite{NY2020} by using different methods. We deal with the  general case where $R$ is an arbitrary unital associative superalgebra.  The isomorphism between $\mathfrak{sq}_n(R)$ and $\mathfrak{sl}_n(R\otimes Q_1)$ allows us to identify the second homology group of $\mathfrak{sq}_n(R)$ with the first $\mathbb{Z}/2\mathbb{Z}$-graded cyclic homology group $\mathrm{HC}_1(R\otimes Q_1)$. By further identifying $\mathrm{HC}_1(R\otimes Q_1)$ with $\mathrm{HC}_1(R)$ (up to a parity change), we obtain the second homology group of $\mathfrak{sq}_n(R)$. We remark that one could also introduce the Steinberg queer Lie superalgebra and follow the same lines as C. Kassel and J. L. Loday's argument to obtain this result, but it would be tedious. 

The remainder of this note is divided into three sections. In Section~\ref{sec:sq}, we will define the queer Lie superaglebra $\mathfrak{q}_n(R)$, the special queer Lie superalgebra $\mathfrak{sq}_n(R)$, and prove the isomorphism between $\mathfrak{sq}_n(R)$ and $\mathfrak{sl}_n(R\otimes Q_1)$ for an arbitrary unital associative superalgebra $R$. Section~\ref{sec:HC} is devoted to identifying $\mathrm{HC}_1(R\otimes Q_1)$ with $\mathrm{HC}_1(R)$ up to a parity change. The second homology group of $\mathfrak{sq}_n(R)$ will be discussed in Section~\ref{sec:H2}. 

Throughout this note, we always assume that $\Bbbk$ is a unital commutative associative base ring with $2$ invertible. All modules, associative (super)algebras and Lie (super)algebras are over $\Bbbk$. We write $\mathbb{Z}/2\mathbb{Z}=\{\bar{0},\bar{1}\}$ and the parity of an element $x$ is denoted by $|x|$. If both $A$ and $B$ are associative superalgebras, then $A\otimes B$ is understood as the associative superalgebra with graded multiplication
$$(a_1\otimes b_1)(a_2\otimes b_2)=(-1)^{|a_2||b_1|}a_1a_2\otimes b_1b_2, \text{ for homogeneous elements } a_2\in A, b_1\in B.$$

 \section{Queer Lie superalgebras}
 \label{sec:sq}

It is shown in \cite{MartinezZelmanov2003} that every $Q(n-1)$-graded Lie superalgebra is centrally isogenous to the Steinberg Lie superalgebra $\mathfrak{st}_{n+1}(S)$, where $S$ is a unital associative superalgebra such that there exists $\nu\in S_{\bar{1}}$ satisfying $\nu^2=1$. In fact, such an associative superalgebra is isomorphic to $R\otimes Q_1$, where $R$ is a (non-super) associative algebra and $Q_1$ is the associative superalgebra
\begin{equation}
Q_1=\left\{\begin{pmatrix}a&b\\b&a\end{pmatrix}\middle|a,b\in\Bbbk\right\}.\label{eq:Q1}
\end{equation}
The associative superalgebra $Q_1$ has one dimensional even part spanned by $1=\begin{pmatrix}1&0\\0&1\end{pmatrix}$ and one dimensional odd part spanned by $\nu=\begin{pmatrix}0&1\\1&0\end{pmatrix}$. 

Let $R$ be a unital associative superalgebra over $\Bbbk$ and $\mathrm{M}_{m|n}(R)$ be the associative superalgebra of all $(m+n)\times(m+n)$-matrices with entries in $R$, in which the parity of a matrix unit $e_{ij}(a)$ (with $a\in R$ at the $(i,j)$-position and $0$ elsewhere) is 
\begin{equation*}
	|e_{ij}(a)|:=|i|+|j|+|a|,\quad a\in R,\quad 1\leqslant i,j\leqslant m+n,
	\label{eq:parity_matrix}
\end{equation*}
and
\begin{equation*}
	|i|=\begin{cases}0,&\text{if }i\leqslant m,\\1,&\text{if }i>m.\end{cases}
	\label{eq:parity_index}
\end{equation*}
The associative superalgebra $\mathrm{M}_{m|n}(R)$ is naturally regraded as a Lie superalgebra under the standard super-commutator. We denote this Lie superalgebra by $\mathfrak{gl}_{m|n}(R)$. Define \textit{the queer Lie superalgebra coordinatized by $R$} to be the Lie sub-superalgebra
	$$\mathfrak{q}_n(R)=\left\{\begin{pmatrix}A&B\\\rho(B)&\rho(A)\end{pmatrix}\middle| A,\ B\in \mathrm{M}_n(R)\right\}\subseteq\mathfrak{gl}_{n|n}(R),$$
	where $\rho(A)=(\rho(a_{ij}))_{n\times n}$ if $A=(a_{ij})_{n\times n}$ and $\rho(a)=(-1)^{|a|}\rho(a)$ for homogeneous element $a\in R$.
The queer Lie superalgebra $\mathfrak{q}_n(R)$ is not perfect even when $R=\Bbbk$ is the base ring. Instead, we call its derived sub-superalgebra \textit{the special queer Lie superalgebra}: 
\begin{equation}
\mathfrak{sq}_n(R)=:[\mathfrak{q}_n(R),\mathfrak{q}_n(R)].
\end{equation}

If $R$ is a non-super unital associative algebra, we will see that $\mathfrak{sq}_n(R)$ is indeed a $Q(n-1)$-graded Lie superalgebra by showing that it is centrally isogenous to $\mathfrak{st}_n(S)$ for some $S$. On the other hand, it is known from \cite{ChenGuay2013} that the Steinberg Lie superalgebra $\mathfrak{st}_n(R\otimes Q_1)$ is a central extension of $\mathfrak{sl}_n(R\otimes Q_1)$ for $n\geqslant3$.  In fact, we can prove the following proposition which connects $\mathfrak{sq}_n(R)$ and $\mathfrak{sl}_n(R\otimes Q_1)$.

\begin{proposition}
\label{ex: gltoq}
Let $R$ be an arbitrary unital associative superalgebra. Then  there is an isomorphism of Lie superalgebras
\begin{equation*}
\mathfrak{q}_n(R)\cong\mathfrak{gl}_{n}(R\otimes Q_1).
\end{equation*}
Consequently,
 $\mathfrak{sq}_n(R)\cong\mathfrak{sl}_{n}(R\otimes Q_1)$ as Lie superalgebras.
\end{proposition}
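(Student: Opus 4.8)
The plan is to realize the isomorphism first at the level of associative superalgebras and then pass to the super-commutator bracket. Write $\rho$ also for the parity automorphism $x\mapsto(-1)^{|x|}x$ of $R$ and for its entrywise extension to $\mathrm{M}_n(R)$; since the parity automorphism is multiplicative one has $\rho(XY)=\rho(X)\rho(Y)$ for $X,Y\in\mathrm{M}_n(R)$ and $\rho^2=\mathrm{id}$. Let $\mathrm{Q}_n(R)$ denote the set of matrices $\begin{pmatrix}A&B\\\rho(B)&\rho(A)\end{pmatrix}$, $A,B\in\mathrm{M}_n(R)$, regarded inside the associative superalgebra $\mathrm{M}_{n|n}(R)$. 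A short block computation, using $\rho^2=\mathrm{id}$, shows that $\mathrm{Q}_n(R)$ is closed under the associative product; in particular $\mathfrak{q}_n(R)$ is exactly $\mathrm{Q}_n(R)$ equipped with the super-commutator, which is what I would use below.

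Next I would use the canonical identification $\mathrm{M}_n(R\otimes Q_1)\cong\mathrm{M}_n(R)\otimes Q_1$ of associative superalgebras (the obvious $\Bbbk$-linear bijection respects the super-signed products; this is routine) to write each element of $\mathrm{M}_n(R\otimes Q_1)$ uniquely as $A\otimes 1+B\otimes\nu$ with $A,B\in\mathrm{M}_n(R)$, and then define
\[
\varphi\colon\ \mathrm{M}_n(R\otimes Q_1)\ \longrightarrow\ \mathrm{Q}_n(R),\qquad \varphi(A\otimes 1+B\otimes\nu)=\begin{pmatrix}A&B\\\rho(B)&\rho(A)\end{pmatrix}.
\]
Uniqueness of the decomposition makes $\varphi$ a $\Bbbk$-linear bijection. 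It is parity-preserving: from $|e_{ij}(a)|=|i|+|j|+|a|$ one checks that $\begin{pmatrix}A&B\\\rho(B)&\rho(A)\end{pmatrix}$ is homogeneous of parity $p$ precisely when $A$ is of parity $p$ and $B$ of parity $p+\bar{1}$, which is exactly when $A\otimes 1+B\otimes\nu$ is of parity $p$. The main step is to check that $\varphi$ is an algebra homomorphism: for homogeneous $A',B'$, expanding $(A\otimes 1+B\otimes\nu)(A'\otimes 1+B'\otimes\nu)$ in $\mathrm{M}_n(R)\otimes Q_1$ via the sign rule and $\nu^2=1$ gives
\[
\bigl(AA'+(-1)^{|B'|}BB'\bigr)\otimes 1\ +\ \bigl(AB'+(-1)^{|A'|}BA'\bigr)\otimes\nu,
\]
while the block product of $\varphi$ applied to the two factors has top row $\bigl(AA'+B\rho(B'),\ AB'+B\rho(A')\bigr)$; these agree because $\rho(B')=(-1)^{|B'|}B'$ and $\rho(A')=(-1)^{|A'|}A'$, and since both products lie in $\mathrm{Q}_n(R)$ — where the lower two blocks are determined by the upper two — agreement of the top rows forces agreement of the whole.

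Granting this, $\varphi$ is an isomorphism of associative superalgebras, hence also an isomorphism $\mathfrak{gl}_n(R\otimes Q_1)\cong\mathfrak{q}_n(R)$ of Lie superalgebras for the super-commutator. For the consequence, an isomorphism of Lie superalgebras carries the derived sub-superalgebra onto the derived sub-superalgebra, so $\varphi$ maps the derived sub-superalgebra of $\mathfrak{gl}_n(R\otimes Q_1)$ onto $[\mathfrak{q}_n(R),\mathfrak{q}_n(R)]=\mathfrak{sq}_n(R)$; recalling that $\mathfrak{sl}_n(-)$ is the derived sub-superalgebra of $\mathfrak{gl}_n(-)$ gives $\mathfrak{sq}_n(R)\cong\mathfrak{sl}_n(R\otimes Q_1)$. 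I expect the only delicate part to be the sign bookkeeping in the homomorphism check, where the super tensor-product sign in $R\otimes Q_1$, the fact that $\nu^2=1$ sits in the even part, and the entrywise action of $\rho$ all have to line up; once $\varphi$ is set up as above this is a direct verification with no genuine obstacle.
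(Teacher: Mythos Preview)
Your proof is correct, and the map you construct is exactly (the inverse of) the one the paper writes down on spanning elements: your $\varphi$ sends $e_{ij}(a\otimes 1)\mapsto u_{ij}(a)$ and $e_{ij}(a\otimes\nu)\mapsto w_{ij}(a)$. The difference is in the verification. The paper works purely at the Lie level, computing the three super-commutator formulas among the $u_{ij}(a)$ and $w_{ij}(a)$ and matching them against the brackets in $\mathfrak{gl}_n(R\otimes Q_1)$. You instead lift the statement to associative superalgebras, checking that $\varphi$ is an isomorphism $\mathrm{M}_n(R)\otimes Q_1\cong \mathrm{Q}_n(R)$ of associative superalgebras and then passing to super-commutators. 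Your route is marginally more conceptual: one block computation replaces three bracket computations, and the closure of $\mathrm{Q}_n(R)$ under the associative product comes for free along the way. The paper's route has the virtue of recording the explicit bracket relations, which are themselves used implicitly later. For the ``consequently'' part both arguments are the same --- an isomorphism carries derived sub-superalgebras to derived sub-superalgebras, and $\mathfrak{sl}_n(S)=[\mathfrak{gl}_n(S),\mathfrak{gl}_n(S)]$ for $n\geqslant 2$.
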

\begin{proof}
We use $e_{ij}(a)$ to denote the $n\times n$-matrix with $a\in R$ at the $(i,j)$-position and $0$ elsewhere. Then $\mathfrak{q}_n(R)$ is spanned by
$$u_{ij}(a)=\begin{pmatrix}e_{ij}(a)&0\\0&(-1)^{|a|}e_{ij}(a)\end{pmatrix},\qquad 
w_{ij}(a)=\begin{pmatrix}0&e_{ij}(a)\\ (-1)^{|a|}e_{ij}(a)&0\end{pmatrix},$$
for $i,j=1,\ldots,n$ and $a\in R$ is homogeneous . They satisfy
\begin{align*}
&[u_{ij}(a),u_{kl}(b)]
=\delta_{jk}u_{il}(ab)-(-1)^{|a||b|}\delta_{il}u_{kj}(ba),\\
&[u_{ij}(a),w_{kl}(b)]
=\delta_{jk}w_{il}(ab)-(-1)^{|a||b|}\delta_{il}w_{kj}(ba),\\
&[w_{ij}(a),w_{kl}(b)]=(-1)^{|b|}\left(\delta_{jk}u_{il}(ab)+(-1)^{|a||b|}\delta_{il}u_{kj}(ba)\right).
\end{align*}
Hence, the $\Bbbk$-linear map
\begin{align*}
\mathfrak{q}_n(R)\rightarrow\mathfrak{gl}_n(R\otimes Q_1),\quad
u_{ij}(a)\mapsto e_{ij}(a\otimes 1),\quad w_{ij}(a)&\mapsto e_{ij}(a\otimes\nu),
\end{align*}
gives the desired isomorphism of Lie superalgebras.
\end{proof}

\begin{corollary}
\label{cor:perfect}
The Lie superalgebra $\mathfrak{sq}_n(R)$ is perfect for $n\geqslant2$ and it can be described as
\begin{equation}
	\mathfrak{sq}_n(R)=\left\{\begin{pmatrix}A&B\\ \rho(B)&\rho(A)\end{pmatrix}|\ \mathrm{Tr}(B)\in [R, R]\right\}.\label{eq:sq}
\end{equation}
\end{corollary}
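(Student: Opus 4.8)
The plan is to transport everything through the isomorphism $\mathfrak{q}_n(R)\cong\mathfrak{gl}_n(R\otimes Q_1)$ of Proposition~\ref{ex: gltoq}, so that the two assertions become statements about $\mathfrak{sl}_n(R\otimes Q_1)$, and then to translate the resulting matrix description back. First I would recall the standard fact that for any unital associative superalgebra $S$ and $n\geqslant 2$, the special linear Lie superalgebra $\mathfrak{sl}_n(S)=[\mathfrak{gl}_n(S),\mathfrak{gl}_n(S)]$ is perfect: this is essentially the Steinberg-relation computation, since for $n\geqslant 2$ one has $e_{ij}(s)=[e_{ik}(s),e_{kj}(1)]$ for a third index $k$, and the diagonal corrections $e_{ii}(s)-(-1)^{\cdots}e_{jj}(s')$ that appear are again commutators. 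Applying the isomorphism gives that $\mathfrak{sq}_n(R)\cong\mathfrak{sl}_n(R\otimes Q_1)$ is perfect for $n\geqslant 2$, which is the first claim.

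For the matrix description \eqref{eq:sq}, I would identify explicitly what $\mathfrak{sl}_n(R\otimes Q_1)$ looks like inside $\mathfrak{gl}_n(R\otimes Q_1)$, then pull it back. The Lie superalgebra $\mathfrak{gl}_n(S)$ has derived subalgebra equal to the kernel of the trace map modulo supercommutators; more precisely, $[\mathfrak{gl}_n(S),\mathfrak{gl}_n(S)]$ consists of those matrices $X=(x_{ij})$ with $\sum_i x_{ii}\in[S,S]_{\mathrm{super}}$, where $[S,S]_{\mathrm{super}}$ is the $\Bbbk$-span of supercommutators $st-(-1)^{|s||t|}ts$. Here $S=R\otimes Q_1$; writing a general element as $A\otimes 1+B\otimes\nu$ with $A,B\in\mathrm{M}_n(R)$, and using that under the isomorphism $u_{ij}(a)\mapsto e_{ij}(a\otimes 1)$, $w_{ij}(a)\mapsto e_{ij}(a\otimes\nu)$ the element $\begin{pmatrix}A&B\\\rho(B)&\rho(A)\end{pmatrix}$ corresponds to the matrix with $(i,j)$-entry $a_{ij}\otimes 1+b_{ij}\otimes\nu$, the trace of this matrix in $\mathrm{M}_n(R\otimes Q_1)$ is $\mathrm{Tr}(A)\otimes 1+\mathrm{Tr}(B)\otimes\nu$. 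So the condition ``trace lies in the supercommutator space'' needs to be unwound for $R\otimes Q_1$.

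The key computation will be to show that for $R$ an associative superalgebra, the supercommutator space $[R\otimes Q_1,\,R\otimes Q_1]_{\mathrm{super}}$ equals $[R,R]\otimes 1+R\otimes\nu$, where $[R,R]$ is the ordinary $\Bbbk$-span of $rs-(-1)^{|r||s|}sr$. The inclusion $\supseteq$ for the $\nu$-component is immediate since $r\otimes\nu=\tfrac12[r\otimes 1,\,1\otimes\nu]_{\mathrm{super}}$ (using $2$ invertible and that $\nu$ is odd, so the supercommutator of an even and an odd element is $r\nu+\nu r=2r\nu$ after identifying $Q_1$-multiplication), and similarly ordinary commutators in $R$ land in the $1$-component; the inclusion $\subseteq$ follows by expanding a supercommutator of $r\otimes\nu^{\epsilon}$ and $s\otimes\nu^{\delta}$ in the four cases and checking each term lands in $[R,R]\otimes 1+R\otimes\nu$. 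Granting this, the trace condition $\mathrm{Tr}(A)\otimes 1+\mathrm{Tr}(B)\otimes\nu\in[R,R]\otimes 1+R\otimes\nu$ reduces exactly to $\mathrm{Tr}(A)\in[R,R]$ together with \emph{no} constraint on $\mathrm{Tr}(B)$. This does not match \eqref{eq:sq} literally, so the last step is to reconcile: one must observe that in the corollary $R$ is being treated as a \emph{non-super} algebra (the relevant case from the introduction, where $R$ carries trivial grading), in which case $\rho=\mathrm{id}$, every element of $R$ is even, and the condition $\mathrm{Tr}(A)\in[R,R]$ is subsumed — or rather the roles of $A$ and $B$ in the matrix presentation are swapped relative to the tensor presentation. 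I would therefore double-check the parity conventions in the definition of $\mathfrak{q}_n(R)$: the "$\rho(A)$" block in the lower right carries a sign twist, so the honest trace of $\begin{pmatrix}A&B\\\rho(B)&\rho(A)\end{pmatrix}$ as an element of $\mathfrak{gl}_{n|n}(R)$ is $\mathrm{Tr}(A)-\mathrm{Tr}(\rho(A))$ which vanishes on even entries, whereas the "supertrace into $R\otimes Q_1$" picks out precisely the $\nu$-part, namely $\mathrm{Tr}(B)$; this is why the condition in \eqref{eq:sq} is on $\mathrm{Tr}(B)$ and not on $\mathrm{Tr}(A)$.

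The main obstacle I anticipate is bookkeeping rather than conceptual: getting all the Koszul signs right in the identification of $[\mathfrak{gl}_n(S),\mathfrak{gl}_n(S)]$ with the kernel of the supertrace-mod-supercommutators, and correctly matching the block-matrix trace of the queer presentation with the Clifford-tensor trace under the isomorphism of Proposition~\ref{ex: gltoq}. Once the dictionary $\mathrm{Tr}(B)\leftrightarrow(\nu\text{-component of the trace in }R\otimes Q_1)$ is pinned down and the lemma $[R\otimes Q_1,R\otimes Q_1]_{\mathrm{super}}=[R,R]\otimes 1+R\otimes\nu$ is in hand, \eqref{eq:sq} drops out, and perfectness follows as above.
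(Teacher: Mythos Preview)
Your overall strategy---transport through the isomorphism of Proposition~\ref{ex: gltoq} and identify $\mathfrak{sl}_n(R\otimes Q_1)$ as the matrices whose trace lies in the supercommutator space---is exactly what the paper does. The gap is in your computation of that supercommutator space: you claim
\[
[R\otimes Q_1,\,R\otimes Q_1]=[R,R]\otimes 1 \;+\; R\otimes\nu,
\]
but this has the two components reversed. In fact $[r\otimes 1,\,1\otimes\nu]=0$ for every $r$ (not $2r\otimes\nu$ as you wrote): the product rule in $R\otimes Q_1$ gives $(r\otimes 1)(1\otimes\nu)=r\otimes\nu$ and $(1\otimes\nu)(r\otimes 1)=(-1)^{|r|}r\otimes\nu$, and after the supercommutator sign $(-1)^{|r|\cdot 1}$ the two terms cancel. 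What \emph{is} true is
\[
[r\otimes\nu,\,1\otimes\nu]=2\,r\otimes 1\quad\text{and}\quad [a\otimes 1,\,b\otimes\nu]=[a,b]\otimes\nu,
\]
so (using that $2$ is invertible) the correct answer is
\[
[R\otimes Q_1,\,R\otimes Q_1]=(R\otimes 1)\oplus([R,R]\otimes\nu).
\]
With this in hand the trace condition $\mathrm{Tr}(A)\otimes 1+\mathrm{Tr}(B)\otimes\nu\in(R\otimes 1)\oplus([R,R]\otimes\nu)$ imposes no restriction on $\mathrm{Tr}(A)$ and forces $\mathrm{Tr}(B)\in[R,R]$, which is precisely \eqref{eq:sq}. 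No appeal to ``$R$ non-super'' or to reinterpreting the block trace is needed; the reconciliation paragraph in your proposal is patching a sign error rather than a genuine convention mismatch. Once you correct this one computation, your proof coincides with the paper's.
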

\begin{proof}
Since $\mathfrak{sq}_n(R)\cong\mathfrak{sl}_n(R\otimes Q_1)$, the perfectness of $\mathfrak{sq}_n(R)$ follows from the perfectness of $\mathfrak{sl}_n(R\otimes Q_1)$. 

Now, the Lie superalgebra $\mathfrak{sl}_n(R\otimes Q_1)$ can be characterized as
$$\mathfrak{sl}_n(R\otimes Q_1)=\left\{X\in\mathfrak{gl}_n(R\otimes Q_1)\middle|\mathrm{Tr}(X)\in [R\otimes Q_1, R\otimes Q_1]\right\}$$
Note that
$$[1\otimes\nu,1\otimes\nu]=2(1\otimes1),\text{ and }[a\otimes1,b\otimes\nu]=[a,b]\otimes\nu.$$
Since $2$ is invertible, we deduce that $[R\otimes Q_1, R\otimes Q_1]=(R\otimes1)\oplus([R,R]\otimes\nu)$. Hence, for an element $X=\sum_{i,j}e_{ij}(a_{ij}\otimes1+b_{ij}\otimes\nu)\in\mathfrak{gl}_n(R\otimes Q_1)$, $$\mathrm{Tr}(X)=\sum_i (a_{ii}\otimes1+b_{ii}\otimes\nu)\in[R\otimes Q_1,R\otimes Q_1]$$
if and only if $\sum_ib_{ii}\in[R,R]$. Hence, the preimage of $X$ in $\mathfrak{sq}_n(R)$ is the matrix $\begin{pmatrix}A&B\\\rho(B)&\rho(A)\end{pmatrix}$ where $A=(a_{ij})$, $B=(b_{ij})$ such that $\mathrm{Tr}(B)\in[R,R]$.
\end{proof}

\begin{remark}
\label{rmk:p1_impft}
\begin{enumerate}
\item The Lie superalgebra $\mathfrak{sq}_1(R)$ is not necessarily perfect. For instance, if $R$ is  super-commutative, then
	$$\mathfrak{q}_1(R):=\left\{\begin{pmatrix} a&b\\\rho(b)&\rho(a)\end{pmatrix}\middle| a,b\in R\right\},\text{ and }\mathfrak{sq}_1(R)=\left\{\begin{pmatrix}a&0\\0&\rho(a)\end{pmatrix}\middle| a\in R\right\}.$$
The Lie superalgebra $\mathfrak{sq}_1(R)$ is not perfect since $[\mathfrak{sq}_1(R),\mathfrak{sq}_1(R)]=0$.
\item In \cite{Gao1993}, the author determined the second homology group of Lie algebra $\mathfrak{sl}_2(S)$ whether $S$ is an associative algebra or not. More concretely, the homology group $H_2(\mathfrak{sl}_2(S),\Bbbk)$ is the generalization of cyclic homology group $\mathrm{HC}_1(S)$ denoted by $\mathrm{hC}_1(S)$ (more details could be found in \cite[Theorem 2]{Gao1993}) when $S$ is an associtive algebra. For nonassociative algebra $S$, the vector space $\mathfrak{sl}_2(S)$ is a Lie algebra if and only if $S$ satisfies certain conditions, that is $\mathfrak{sl}_2$-admissible.  	In this case, the homology group was also computed (see \cite[Proposition 4]{Gao1993}). Analogue to $\mathfrak{sl}_2$ case, the definition of $\mathfrak{sq}_2(R)$ for a (non)associative superalgebra $R$ and its second homology group should be considered separately.  
\item As C. Martinez and E. I. Zelmanov pointed out in \cite{MartinezZelmanov2003}, a $Q(2)$-graded  Lie superalgebra is centrally isogenous to $\mathbf{st}_{3}(R)$, where $R$ is a unital alternative superalgebra that is not necessarily associative. But one can check that the isomorphism in Proposition \ref{ex: gltoq} is still valid if $R$ is a unital alternative superalgebra. 
\end{enumerate}
\end{remark}
When $R$ is specified to be a concrete unital associative superalgebra, we can find many well acquainted examples in these queer Lie superalgebras. 

\begin{example}
	\label{ex:comm}
Let $R$ be a unital super-commutative associative superalgebra, then 
\begin{equation}
	\mathfrak{q}_n(R)\cong\mathfrak{q}_n(\Bbbk)\otimes_{\Bbbk}R,
\end{equation}
where the bracket of $\mathfrak{q}_n(\Bbbk)\otimes_{\Bbbk}R$ is given by
\begin{equation*}
	[x\otimes a,y\otimes b]=(-1)^{|a||y|}[x,y]\otimes ab,\quad x,y\in\mathfrak{q}_n(\Bbbk),\quad a,b\in R.
\end{equation*}
In particular, this gives the untwisted loop queer Lie superalgebra when $R$ is the algebra of Laurent polynomials.
\end{example}

\begin{example}
	\label{ex:qtogl}
	Suppose that $\Bbbk$ contains $\sqrt{-1}$. Let $R$ be a unital associative superalgebra, then  there is an isomorphism of Lie superalgebras 
	\begin{equation*}
		\mathfrak{q}_n(R\otimes Q_1)\cong\mathfrak{gl}_{n|n}(R),
	\end{equation*}
given by 
\begin{align*}
	u_{ij}(r\otimes 1)&\mapsto e_{ij}(r)+(-1)^{|r|}e_{n+i, n+j}(r),&u_{ij}(r\otimes \nu)&\mapsto e_{i,n+j}(r)+(-1)^{|r|}e_{n+i, j}(r),\\
	w_{ij}(r\otimes 1)&\mapsto \sqrt{-1}(-e_{i,n+j}(r)+(-1)^{|r|}e_{n+i, j}(r)),&w_{ij}(r\otimes \nu)&\mapsto \sqrt{-1}(e_{i,j}(r)-(-1)^{|r|}e_{n+i, n+j}(r)),
\end{align*}
where $u_{ij}, w_{ij}$ have the same meaning as in Corollary~\ref{cor:perfect}. Consequently, $\mathfrak{sq}_n(R\otimes Q_1)\cong\mathfrak{sl}_{n|n}(R)$ as Lie superalgebras.
\end{example}

\section{The first $\mathbb{Z}/2\mathbb{Z}$-graded cyclic homology}
\label{sec:HC}

We have seen from Section~\ref{sec:sq} that the queer Lie superalgebra $\mathfrak{sq}_n(R)$ is perfect for $n\geqslant3$ and is isomorphic to $\mathfrak{sl}_n(R\otimes Q_1)$. It has been demonstrated in  \cite{ChenGuay2013} that, if $n\geqslant5$, the universal central extension of $\mathfrak{sl}_n(R\otimes Q_1)$ can be described by the so-called Steinberg Lie superalgebra $\mathfrak{st}_n(R\otimes Q_1)$, and the kernel of  the canonical homomorphism 
$$\pi: \mathfrak{st}_n(R\otimes Q_1)\rightarrow\mathfrak{sl}_n(R\otimes Q_1)$$
is isomorphic to the first $\mathbb{Z}/2\mathbb{Z}$-graded cyclic homology group $\mathrm{HC}_1(R\otimes Q_1)$. We further identify it with the first cyclic homology group $\mathrm{HC}_1(R)$ up to a parity change.

We first recall the definition of $\mathrm{HC}_1(R)$ (we refer to \cite{Kassel1986} for more details about the $\mathbb{Z}/{2\mathbb{Z}}$-graded cyclic homology). Let $I_R$ be the $\Bbbk$-submodule of $R\otimes R$ generated by
$$a\otimes b+(-1)^{|a||b|}b\otimes a,\text{ and }(-1)^{|a||c|}ab\otimes c+(-1)^{|b||a|}bc\otimes a+(-1)^{|c||b|}ca\otimes b$$
for homogeneous $a,b,c\in R$ and $\langle R, R\rangle:=(R\otimes R)/I_R$. We denote by $\boldsymbol{\lambda}(a,b)$ the canonical image of $a\otimes b$ in $\langle R, R\rangle$. Then
$$\mathrm{HC}_1(R):=\left\{\sum_i\boldsymbol{\lambda}(a_i,b_i)\in \langle R,R\rangle\middle| \sum_i[a_i,b_i]=0\right\}.$$
In order to distinguish $\mathrm{HC}_1(R)$ and $\mathrm{HC}_1(R\otimes Q_1)$, we denote by $\mathbf{h}(x,y)$ the canonical image of $x\otimes y$ in $\langle R\otimes Q_1, R\otimes Q_1\rangle$ for $x,y\in R\otimes Q_1$.

\begin{lemma}\label{hrelations}
The following relations hold in $\langle R\otimes Q_1,R\otimes Q_1\rangle$:
\begin{align}
&\mathbf{h}(a\otimes 1, b\otimes \nu)=-(-1)^{|a||b|}\mathbf{h}(b\otimes 1, a\otimes\nu),\quad\text{ for } a,b\in R,\label{eq:hodd}\\
&\mathbf{h}(a\otimes 1, b\otimes 1)=\mathbf{h}(a\otimes \nu, b\otimes \nu)=0,\quad \text{ for } a\in R_{\bar{1}} \text{ or } b\in R_{\bar{1}}, \label{eq:heven1}\\
&\mathbf{h}(a\otimes 1, b\otimes 1)=\frac{1}{2}\mathbf{h}([a,b]\otimes\nu, 1\otimes\nu),\quad \text{ for } a, b\in R_{\bar{0}},\label{eq:heven2}\\
&\mathbf{h}(a\otimes\nu,b\otimes\nu)=\frac{1}{2}\mathbf{h}(\{a,b\}\otimes\nu,1\otimes\nu),\quad \text{ for } a, b\in R_{\bar{0}},\label{eq:heven3}
\end{align}
where $\{, \}$ is possion bracket, i.e.,  $\{a,b\}=ab+ba.$
\end{lemma}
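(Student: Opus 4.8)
The plan is to read off all four identities directly from the two families of generators of $I_{R\otimes Q_1}$, which in $\langle R\otimes Q_1,R\otimes Q_1\rangle$ amount to graded skew-symmetry, $\mathbf{h}(x,y)=-(-1)^{|x||y|}\mathbf{h}(y,x)$, and the graded cyclic identity
\[
(-1)^{|x||z|}\mathbf{h}(xy,z)+(-1)^{|y||x|}\mathbf{h}(yz,x)+(-1)^{|z||y|}\mathbf{h}(zx,y)=0 .
\]
The remaining ingredients are the multiplication rules $(r\otimes 1)(s\otimes q)=rs\otimes q$, $(r\otimes q)(s\otimes 1)=(-1)^{|q||s|}rs\otimes q$, $(r\otimes\nu)(s\otimes\nu)=(-1)^{|s|}rs\otimes 1$ in $R\otimes Q_1$, the invertibility of $2$, and the observation that $1\otimes\nu$ is an odd square root of the identity $1\otimes 1$. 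The crucial preliminary step is to establish two vanishing statements. Feeding the triple $(x,1\otimes 1,1\otimes 1)$ into the cyclic identity and using skew-symmetry gives $2\mathbf{h}(x,1\otimes 1)=-\mathbf{h}(1\otimes 1,x)=\mathbf{h}(x,1\otimes 1)$, so $\mathbf{h}(x,1\otimes 1)=0$ for all $x$; feeding in $(c\otimes\nu,1\otimes\nu,1\otimes\nu)$ and using $(c\otimes\nu)(1\otimes\nu)=c\otimes 1$, $(1\otimes\nu)^2=1\otimes 1$, $(1\otimes\nu)(c\otimes\nu)=(-1)^{|c|}c\otimes 1$ together with the first fact (which kills the term $\mathbf{h}(1\otimes 1,c\otimes\nu)$) gives $2(-1)^{|c|+1}\mathbf{h}(c\otimes 1,1\otimes\nu)=0$, so $\mathbf{h}(c\otimes 1,1\otimes\nu)=\mathbf{h}(1\otimes\nu,c\otimes 1)=0$ for every $c\in R$.

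For \eqref{eq:hodd} I would apply the cyclic identity to the triple $(a\otimes\nu,1\otimes\nu,b\otimes\nu)$; the three products are $(a\otimes\nu)(1\otimes\nu)=a\otimes 1$, $(1\otimes\nu)(b\otimes\nu)=(-1)^{|b|}b\otimes 1$ and $(b\otimes\nu)(a\otimes\nu)=(-1)^{|a|}ba\otimes 1$, the last of which contributes a multiple of $\mathbf{h}(ba\otimes 1,1\otimes\nu)=0$, so only two terms remain and collecting the Koszul signs yields exactly $\mathbf{h}(a\otimes 1,b\otimes\nu)=-(-1)^{|a||b|}\mathbf{h}(b\otimes 1,a\otimes\nu)$.

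For \eqref{eq:heven2} and \eqref{eq:heven3}, with $a,b\in R_{\bar{0}}$, I would apply the cyclic identity to $(a\otimes\nu,1\otimes\nu,b\otimes 1)$, whose three products are $a\otimes 1$, $b\otimes\nu$ and $ba\otimes\nu$. Using skew-symmetry to replace $\mathbf{h}(b\otimes\nu,a\otimes\nu)$ by $\mathbf{h}(a\otimes\nu,b\otimes\nu)$ and $\mathbf{h}(a\otimes 1,b\otimes 1)$ by $-\mathbf{h}(b\otimes 1,a\otimes 1)$ turns this into a single linear relation among $\mathbf{h}(a\otimes 1,b\otimes 1)$, $\mathbf{h}(a\otimes\nu,b\otimes\nu)$ and $\mathbf{h}(ba\otimes\nu,1\otimes\nu)$; interchanging $a$ and $b$ produces a companion relation involving $\mathbf{h}(ab\otimes\nu,1\otimes\nu)$, and taking the sum and the difference of the two (again with $2$ invertible) isolates $\mathbf{h}(a\otimes 1,b\otimes 1)=\frac12\mathbf{h}([a,b]\otimes\nu,1\otimes\nu)$ and $\mathbf{h}(a\otimes\nu,b\otimes\nu)=\frac12\mathbf{h}(\{a,b\}\otimes\nu,1\otimes\nu)$. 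Relation \eqref{eq:heven1} is then obtained from the very same pair of cyclic identities by specializing one of $a,b$ to lie in $R_{\bar{1}}$ and tracking how the shifted parities rearrange the coefficients. I do not expect a conceptual obstacle — everything reduces to the two defining relations plus the two vanishing facts — and the only point that demands genuine care is the systematic bookkeeping of the Koszul sign $(-1)^{|a_2||b_1|}$ from the super tensor product as it propagates through each use of the cyclic identity; the one step that is not purely mechanical is the decision to test triples built out of $1\otimes\nu$, exploiting that it squares to $1\otimes 1$.
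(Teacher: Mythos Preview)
Your strategy coincides with the paper's: both proofs use only graded skew-symmetry and the cyclic identity, and your choice of triples for the two preliminary vanishings, for \eqref{eq:hodd}, and for \eqref{eq:heven2}--\eqref{eq:heven3} matches the paper's up to cyclic relabeling. Those parts are correct.

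The argument for \eqref{eq:heven1}, however, has a gap. With $|a|+|b|$ odd, applying the cyclic identity to $(a\otimes\nu,\,1\otimes\nu,\,b\otimes1)$ and to its companion with $a\leftrightarrow b$ still leaves a residual term of the form $\mathbf{h}(c\otimes\nu,\,1\otimes\nu)$ with $c\in R_{\bar 1}$ (namely $c=ab\pm ba$), and neither of your two preliminary vanishings kills this term: you only established $\mathbf{h}(x,1\otimes1)=0$ and $\mathbf{h}(c\otimes1,1\otimes\nu)=0$. Specializing $b=1$ in your relation collapses to a tautology, so the triple $(a\otimes\nu,1\otimes\nu,b\otimes1)$ by itself cannot force $\mathbf{h}(c\otimes\nu,1\otimes\nu)=0$ for odd $c$, and hence cannot deliver \eqref{eq:heven1}.

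The paper closes this gap by first feeding the different triple $x=a\otimes\nu,\ y=b\otimes\nu,\ z=1\otimes1$ into the cyclic identity; combining with skew-symmetry this yields
\[
\bigl(1-(-1)^{|a|}\bigr)\,\mathbf{h}(a\otimes\nu,\,b\otimes\nu)=0,
\]
which immediately gives the $\nu$-part of \eqref{eq:heven1}. Only then does it turn to the triple $(a\otimes\nu,\,b\otimes\nu,\,c\otimes1)$ (your triple is the case $b=1$ of this) and specialize; with the $\nu$-part already known, the $1$-part of \eqref{eq:heven1} follows. So your plan needs one additional cyclic-identity instance, with $z=1\otimes1$, before the rest of the argument goes through.
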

\begin{proof}
It follows from the definition of $\langle R\otimes Q_1,R\otimes Q_1\rangle$ that $\mathbf{h}(x,y)$ satisfies
\begin{gather}
\mathbf{h}(x,y)+(-1)^{|x||y|}\mathbf{h}(y,x)=0,\label{eq:h1}\\
(-1)^{|x||z|}\mathbf{h}(xy,z)+(-1)^{|y||x|}\mathbf{h}(yz,x)+(-1)^{|z||y|}\mathbf{h}(zx,y)=0.\label{eq:h2}
\end{gather}

By setting $y=z=1\otimes1$ in \eqref{eq:h2}, we first observe that 
$$\mathbf{h}(x,1\otimes1)=\mathbf{h}(1\otimes1,x)=0$$
for all $x\in R\otimes Q_1$.

In order to show \eqref{eq:hodd}, we set $x=a\otimes\nu, y=b\otimes\nu, z=1\otimes\nu$ in \eqref{eq:h2}. Then
\begin{equation*}
\mathbf{h}(ab\otimes1,1\otimes \nu)
+(-1)^{|a||b|}\mathbf{h}(b\otimes1,a\otimes \nu)
+\mathbf{h}(a\otimes1,b\otimes \nu)
=0.
\end{equation*}
from which we observe that $\mathbf{h}(a\otimes1,1\otimes\nu)=0$ for all $a\in R$. Hence, \eqref{eq:hodd} follows.

Similarly, set $x=a\otimes\nu, y=b\otimes\nu, z=c\otimes1$ in \eqref{eq:h2}, we obtain
\begin{equation}
(-1)^{|a||c|+|a|}\mathbf{h}(ab\otimes1,c\otimes1)-(-1)^{|b||a|}\mathbf{h}(bc\otimes\nu, a\otimes\nu)+(-1)^{|b||c|+|b|}\mathbf{h}(ca\otimes\nu,b\otimes\nu)=0.\label{eq:h9}
\end{equation}
Let $c=1$ in \eqref{eq:h9}, we have
$$0-(-1)^{|a||b|}\mathbf{h}(b\otimes\nu, a\otimes\nu)+(-1)^{|b|}\mathbf{h}(a\otimes\nu,b\otimes\nu)=0.$$
Then it follows from \eqref{eq:h1} that
\begin{equation}
\left(1-(-1)^{|a|}\right)\mathbf{h}(a\otimes\nu, b\otimes \nu)=0=(1-(-1)^{|a|})\mathbf{h}(b\otimes\nu,a\otimes\nu),\label{eq:h5}
\end{equation}
which ensures that $\mathbf{h}(a\otimes\nu,b\otimes\nu)=0$ if $a$ or $b$ is odd.

Considering the case of $a=1$ and the case of $b=1$ in \eqref{eq:h9} respectively, we deduce that
\begin{align*}
\mathbf{h}(b\otimes1,c\otimes1)-\mathbf{h}(bc\otimes\nu, 1\otimes\nu)+(-1)^{|b||c|+|b|}\mathbf{h}(c\otimes\nu,b\otimes\nu)=0,\\
(-1)^{|a||c|+|a|}\mathbf{h}(a\otimes1,c\otimes 1)-\mathbf{h}(c\otimes\nu, a\otimes \nu)+\mathbf{h}(ca\otimes\nu,1\otimes\nu)=0.
\end{align*}
Equivalently, for all $a,b\in R$,
\begin{gather}
(-1)^{|b|}\mathbf{h}(a\otimes1,b\otimes1)+\mathbf{h}(a\otimes\nu,b\otimes\nu)-\mathbf{h}(ab\otimes\nu,1\otimes\nu)=0,\label{eq:h7}\\
\mathbf{h}(a\otimes1,b\otimes1)+(-1)^{|b|}\mathbf{h}(a\otimes\nu,b\otimes\nu)-\mathbf{h}(ab\otimes\nu,1\otimes\nu)=0.
\end{gather}
These imply that
$$\left(1-(-1)^{|b|}\right)\mathbf{h}(a\otimes1,b\otimes1)=-\left(1-(-1)^{|b|}\right)\mathbf{h}(a\otimes\nu,b\otimes\nu)=0,$$
and hence $\mathbf{h}(a\otimes1,b\otimes1)=0$ if $a$ or $b$ is odd.

For \eqref{eq:heven2} and \eqref{eq:heven3}, we assume that $a,b$ are both even and exchanging $a$ and $b$ in \eqref{eq:h7}, then
\begin{equation}\mathbf{h}(b\otimes1,a\otimes1)+\mathbf{h}(b\otimes\nu, a\otimes\nu)-\mathbf{h}(ba\otimes\nu,1\otimes\nu)=0.\label{eq:h8}\end{equation}
Then \eqref{eq:h1}, \eqref{eq:h7} and \eqref{eq:h8} yield \eqref{eq:heven2} and \eqref{eq:heven3}.\end{proof}

\begin{theorem}
\label{maintheorem}
Let $R$ be an arbitrary unital associative superalgebra over $\Bbbk$. Then 
$$\mathrm{HC}_1(R\otimes Q_1)\cong\mathrm{HC}_1(R)\otimes\Bbbk^{0|1}.$$
\end{theorem}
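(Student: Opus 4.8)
The plan is to construct a parity‑reversing $\Bbbk$‑linear map $\Phi$ from $\langle R,R\rangle$ to $\langle R\otimes Q_1,R\otimes Q_1\rangle$ together with a one‑sided inverse, and then to check that $\Phi$ matches $\mathrm{HC}_1(R)$ with $\mathrm{HC}_1(R\otimes Q_1)$. Two preparatory observations organize everything. First, Lemma~\ref{hrelations} shows that $\langle R\otimes Q_1,R\otimes Q_1\rangle$ is generated as a $\Bbbk$‑module by the elements $\mathbf{h}(a\otimes1,b\otimes\nu)$ with $a,b\in R$ homogeneous, together with the elements $\mathbf{h}(c\otimes\nu,1\otimes\nu)$ with $c\in R_{\bar0}$: by \eqref{eq:heven1}--\eqref{eq:heven3} the pairings $\mathbf{h}(a\otimes1,b\otimes1)$ and $\mathbf{h}(a\otimes\nu,b\otimes\nu)$ are each either $0$ or of the second form, while \eqref{eq:h1} turns $\mathbf{h}(a\otimes\nu,b\otimes1)$ into one of the first. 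Second, for any associative superalgebra $A$ the super‑commutator induces a well‑defined $\Bbbk$‑linear map $\beta_A\colon\langle A,A\rangle\to A$, $\mathbf{h}(x,y)\mapsto[x,y]$, with image $[A,A]$ and kernel $\mathrm{HC}_1(A)$; here $[R\otimes Q_1,R\otimes Q_1]=(R\otimes1)\oplus([R,R]\otimes\nu)$ as in Corollary~\ref{cor:perfect}, and $\beta_{R\otimes Q_1}\big(\mathbf{h}(c\otimes\nu,1\otimes\nu)\big)=2\,c\otimes1$.

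First I would put $\Phi\big(\boldsymbol\lambda(a,b)\big)=\mathbf{h}(a\otimes1,b\otimes\nu)$. To see this is well‑defined one checks the two defining relations of $\langle R,R\rangle$: the graded‑symmetry relation is exactly \eqref{eq:hodd}, and the cyclic relation is obtained by specializing \eqref{eq:h2} to $x=a\otimes1$, $y=b\otimes1$, $z=c\otimes\nu$ and rewriting the three resulting terms with \eqref{eq:h1} and \eqref{eq:hodd}. Since $|a\otimes1|+|b\otimes\nu|=|\boldsymbol\lambda(a,b)|+\bar1$, the map $\Phi$ reverses parity. From $\beta_{R\otimes Q_1}\big(\Phi(\boldsymbol\lambda(a,b))\big)=[a\otimes1,b\otimes\nu]=[a,b]\otimes\nu=\iota_\nu\big(\beta_R(\boldsymbol\lambda(a,b))\big)$, where $\iota_\nu\colon R\hookrightarrow R\otimes Q_1$, $r\mapsto r\otimes\nu$, is injective, it follows that $\Phi$ sends $\mathrm{HC}_1(R)=\ker\beta_R$ into $\mathrm{HC}_1(R\otimes Q_1)=\ker\beta_{R\otimes Q_1}$. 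For surjectivity onto $\mathrm{HC}_1(R\otimes Q_1)$: by the first paragraph any element there can be written $\sum_i\mathbf{h}(a_i\otimes1,b_i\otimes\nu)+\mathbf{h}(c\otimes\nu,1\otimes\nu)$ with $c\in R_{\bar0}$ (the terms of the second type collapse to one by bilinearity), and applying $\beta_{R\otimes Q_1}$ gives $\big(\sum_i[a_i,b_i]\big)\otimes\nu+2\,c\otimes1=0$; the decomposition $R\otimes Q_1=(R\otimes1)\oplus(R\otimes\nu)$ and the invertibility of $2$ then force $c=0$ and $\sum_i[a_i,b_i]=0$, so the element equals $\Phi\big(\sum_i\boldsymbol\lambda(a_i,b_i)\big)$ with $\sum_i\boldsymbol\lambda(a_i,b_i)\in\mathrm{HC}_1(R)$.

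The hard part is injectivity of $\Phi$. Here the plan is to build a $\Bbbk$‑linear retraction $\Psi\colon\langle R\otimes Q_1,R\otimes Q_1\rangle\to\langle R,R\rangle$ with $\Psi\circ\Phi=\mathrm{id}$: first define $\Psi$ on $(R\otimes Q_1)\otimes_\Bbbk(R\otimes Q_1)$ by $\Psi\big((a\otimes1)\otimes(b\otimes\nu)\big)=\boldsymbol\lambda(a,b)$, $\Psi\big((a\otimes\nu)\otimes(b\otimes1)\big)=-(-1)^{(|a|+1)|b|}\boldsymbol\lambda(b,a)$ and $\Psi\big((a\otimes1)\otimes(b\otimes1)\big)=\Psi\big((a\otimes\nu)\otimes(b\otimes\nu)\big)=0$, and then verify that $\Psi$ kills the submodule $I_{R\otimes Q_1}$, so that it descends. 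This verification is the technical core and the main obstacle: one runs through the graded‑symmetry generators and the cyclic generators of $I_{R\otimes Q_1}$ for each of the possible values $1,\nu$ of the two $Q_1$‑entries, collapsing the images with $\boldsymbol\lambda(r,1)=0$ and the defining relations of $\langle R,R\rangle$; it is routine but lengthy, and this is where the arithmetic of $Q_1$ is actually used. Granting it, $\Phi$ is injective, so together with the previous paragraph $\Phi$ restricts to an isomorphism $\mathrm{HC}_1(R)\otimes\Bbbk^{0|1}\xrightarrow{\ \sim\ }\mathrm{HC}_1(R\otimes Q_1)$, which is the assertion. (An equivalent, perhaps cleaner, packaging: prove that $(\omega,c)\mapsto\Phi(\omega)+\mathbf{h}(c\otimes\nu,1\otimes\nu)$ is an isomorphism $\langle R,R\rangle\oplus R_{\bar0}\xrightarrow{\ \sim\ }\langle R\otimes Q_1,R\otimes Q_1\rangle$ — surjectivity is the first paragraph, injectivity again needs the retraction above — and then note that $\beta_{R\otimes Q_1}$ carries the two summands into the complementary summands $R\otimes\nu$ and $R\otimes1$ via $\beta_R$ and via $c\mapsto2c$, so that $\ker\beta_{R\otimes Q_1}=\Phi(\ker\beta_R)$ immediately.)
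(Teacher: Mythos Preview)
Your proof is correct and follows essentially the same route as the paper: both construct the mutually inverse odd maps $\boldsymbol\lambda(a,b)\leftrightarrow\mathbf{h}(a\otimes1,b\otimes\nu)$, use Lemma~\ref{hrelations} to reduce every element of $\langle R\otimes Q_1,R\otimes Q_1\rangle$ to the spanning set $\{\mathbf{h}(a\otimes1,b\otimes\nu),\ \mathbf{h}(c\otimes\nu,1\otimes\nu)\}$, and kill the second family inside $\mathrm{HC}_1$ via $\beta_{R\otimes Q_1}$. Your treatment is in fact somewhat more careful than the paper's, which simply asserts that the inverse map $\varphi$ is well-defined ``by Lemma~\ref{hrelations}''; your construction of $\Psi$ on the full tensor product and the case-by-case verification that it annihilates $I_{R\otimes Q_1}$ is exactly what is needed to justify that assertion.
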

\begin{proof}
The $\mathbb{Z}/2\mathbb{Z}$-graded vector space $\Bbbk^{0|1}$ has a zero even part and a one-dimensional odd part. Hence, $\mathrm{HC}_1(R)\otimes\Bbbk^{0|1}$ is the $\mathbb{Z}/2\mathbb{Z}$-graded space obtained by exchanging the even and odd parts of $\mathrm{HC}_1(R)$. It suffices to show that there is an odd isomorphism between $\mathrm{HC}_1(R\otimes Q_1)$ and $\mathrm{HC}_1(R)$.

Note that every element in $R\otimes Q_1$ can be written as $a\otimes1+b\otimes\nu$ for $a,b\in R$. By Lemma~\ref{hrelations}, every element in $\langle R\otimes Q_1, R\otimes Q_1\rangle$ can be written as
$$z=\sum_j\mathbf{h}(c_j\otimes\nu,1\otimes\nu)+\sum_i\mathbf{h}(a_i\otimes1,b_i\otimes\nu)$$
where $c_j\in R_{\bar{0}}$, $a_i, b_i\in R$, and both summations run over some finite sets. Such an element $z$ lies in $\mathrm{HC}_1(R\otimes Q_1)$ if and only if
$$0=\sum_j[c_j\otimes\nu,1\otimes\nu]+\sum_i[a_i\otimes1,b_i\otimes\nu]=2\sum_j c_j\otimes1+\sum_i[a_i,b_i]\otimes\nu,$$
which is equivalent to $\sum_j c_j=0$ and $\sum_i[a_i,b_i]=0$. Hence,
$$\mathrm{HC}_1(R\otimes Q_1)=\left\{\sum_j\mathbf{h}(a_j\otimes1,b_j\otimes\nu)\middle|a_j,b_j\in R\text{ such that }\sum_j[a_j,b_j]=0\right\}.$$

Now, we can define two odd $\Bbbk$-linear maps:
\begin{align*}
&\varphi:\mathrm{HC}_1(R\otimes Q_1)\rightarrow\mathrm{HC}_1(R), &&\sum_i\mathbf{h}(a_i\otimes1,b_i\otimes\nu)\mapsto\sum_i\boldsymbol{\lambda}(a_i,b_i),\\
&\psi:\mathrm{HC}_1(R)\rightarrow\mathrm{HC}_1(R\otimes Q_1), &&\sum_i\boldsymbol{\lambda}(a_i,b_i)\mapsto\sum_i\mathbf{h}(a_i\otimes1,b_i\otimes\nu).
\end{align*}
Lemma~\ref{hrelations} imply that both $\varphi$ and $\psi$ are well-defined and they are inverse to each other. Hence, $\varphi$ is an odd isomorphism. In other word, $\mathrm{HC}_1(R\otimes Q_1)\cong\mathrm{HC}_1(R)\otimes\Bbbk^{0|1}$ as $\mathbb{Z}/2\mathbb{Z}$-graded $\Bbbk$-modules.
\end{proof}

\begin{remark}
If $R=\Bbbk$, $R\otimes Q_1\cong Q_1$ is the Clifford superalgebra associated to the quadratic form $q(x)=x^2$. Its $\mathbb{Z}/2\mathbb{Z}$-graded cyclic homology $\mathrm{HC}_*(Q_1)$ has been fully understood in \cite{Kassel1986}. For a unital associative superalgebra $R$, the higher degree $\mathbb{Z}/2\mathbb{Z}$-graded cyclic homology $\mathrm{HC}_n(R\otimes Q_1)$ with $n\geqslant2$ can also be computed through K\"{u}nneth formula established in \cite{Kassel1986}.
\end{remark}

\section{Second homology of queer Lie superalgebras}
\label{sec:H2}

The second homology group of a Lie superalgebra is identified with the kernel of its universal central extension. Based on the interpretation of $\mathrm{HC}_1(R\otimes Q_1)$ in Section~\ref{sec:HC}, we obtain the second homology group of the queer Lie superalgebra $\mathfrak{sq}_n(R)$.

\begin{theorem}
Let $R$ be a unital associative superalgebra and $n\geqslant3$. Then the second homology of the queer Lie superalgebra $\mathfrak{sq}_n(R)$ is 
$$\mathrm{H}_2(\mathfrak{sq}_n(R))\cong\mathrm{HC}_1(R)\otimes\Bbbk^{0|1}.$$
\end{theorem}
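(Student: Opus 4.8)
The plan is to assemble the result from the three ingredients developed in the preceding sections. First I would invoke Proposition~\ref{ex: gltoq} to replace $\mathfrak{sq}_n(R)$ by the isomorphic Lie superalgebra $\mathfrak{sl}_n(R\otimes Q_1)$, so that $\mathrm{H}_2(\mathfrak{sq}_n(R))\cong\mathrm{H}_2(\mathfrak{sl}_n(R\otimes Q_1))$. Since homology of a Lie superalgebra with trivial coefficients is an isomorphism invariant, this reduction is immediate and requires no computation.

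Next I would appeal to the structural result quoted from \cite{ChenGuay2013} at the start of Section~\ref{sec:HC}: for $n\geqslant 5$ the Steinberg Lie superalgebra $\mathfrak{st}_n(R\otimes Q_1)$ is the universal central extension of $\mathfrak{sl}_n(R\otimes Q_1)$, with kernel of $\pi\colon\mathfrak{st}_n(R\otimes Q_1)\to\mathfrak{sl}_n(R\otimes Q_1)$ isomorphic to the first $\mathbb{Z}/2\mathbb{Z}$-graded cyclic homology $\mathrm{HC}_1(R\otimes Q_1)$. Because the kernel of the universal central extension of a perfect Lie superalgebra is exactly its second homology group (and $\mathfrak{sl}_n(R\otimes Q_1)$ is perfect by Corollary~\ref{cor:perfect}), this gives $\mathrm{H}_2(\mathfrak{sl}_n(R\otimes Q_1))\cong\mathrm{HC}_1(R\otimes Q_1)$. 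Finally I would apply Theorem~\ref{maintheorem} to rewrite $\mathrm{HC}_1(R\otimes Q_1)\cong\mathrm{HC}_1(R)\otimes\Bbbk^{0|1}$, which yields the claimed formula for $n\geqslant 5$.

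The remaining obstacle is the gap between the hypothesis $n\geqslant 3$ in the statement and the bound $n\geqslant 5$ under which \cite{ChenGuay2013} identifies the kernel with $\mathrm{HC}_1$. To cover $n=3,4$ I would either cite the refinement in \cite{ChenSun2015} or \cite{GL2017} (which extend the $\mathfrak{sl}_{m|n}(R)$ computation down to $m+n\geqslant 3$, and here $R\otimes Q_1$ plays the role of the coefficient superalgebra), or else note that $\mathfrak{sl}_n(R\otimes Q_1)$ is $\mathrm{A}_{n-1}$-graded and invoke the corresponding central-extension results for root-graded Lie superalgebras. This is the one place where care is needed; the rest is a chain of already-established isomorphisms. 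I expect the write-up to be short: state the $n\geqslant5$ case from \cite{ChenGuay2013} combined with Proposition~\ref{ex: gltoq} and Theorem~\ref{maintheorem}, then add a sentence handling $n=3,4$ via the finer results of \cite{ChenSun2015,GL2017}.
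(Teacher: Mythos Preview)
Your reduction via Proposition~\ref{ex: gltoq} and the application of Theorem~\ref{maintheorem} match the paper exactly, and for $n\geqslant 5$ your argument is the same as theirs. The divergence is in how the cases $n=3,4$ are handled, and here your proposal misses the actual mechanism.

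The paper does \emph{not} switch to \cite{ChenSun2015} or \cite{GL2017} for small $n$. Instead it quotes the full result of \cite{ChenGuay2013}, which already treats $n=3,4$ but produces correction terms:
\[
\mathrm{H}_2(\mathfrak{sl}_n(S))\cong \mathrm{HC}_1(S)\oplus \mathcal{A}_n^{\oplus 6}\quad(n=3,4),
\]
where $\mathcal{A}_n=S/\mathcal{I}_n$ and $\mathcal{I}_n$ is the two-sided ideal generated by $nS$ and $[S,S]$. The key observation---specific to $S=R\otimes Q_1$---is that $[1\otimes\nu,1\otimes\nu]=2(1\otimes 1)$ is a unit (since $2\in\Bbbk^\times$), so $\mathcal{I}_n=S$ and $\mathcal{A}_n=0$. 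This is what collapses the $n=3,4$ answer back to $\mathrm{HC}_1(R\otimes Q_1)$.

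Your proposed alternatives do not sidestep this. The results in \cite{ChenSun2015,GL2017} concern $\mathfrak{sl}_{m|n}(S)$ and carry the same type of correction terms for small rank; invoking them for $\mathfrak{sl}_{n|0}(R\otimes Q_1)$ would still leave you needing to show those terms vanish, which again comes down to $[R\otimes Q_1,R\otimes Q_1]$ containing a unit. So the missing ingredient in your plan is precisely this one-line computation, not a different citation.
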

\begin{proof}
Note that $\mathfrak{sq}_n(R)\cong\mathfrak{sl}_n(R\otimes Q_1)$, the proof can be done by applying the results obtained in \cite{ChenGuay2013}. In fact, it is shown in \cite{ChenGuay2013} that
\begin{equation}
\mathrm{H}_2(\mathfrak{sl}_n(R\otimes Q_1))=\begin{cases}\mathrm{HC}_1(R\otimes Q_1),&n\geqslant 5,\\
\mathrm{HC}_1(R\otimes Q_1)\oplus (\mathcal{A}_2)^{\oplus 6}, &n=4,\\
\mathrm{HC}_1(R\otimes Q_1)\oplus (\mathcal{A}_3)^{\oplus 6}, &n=3,
\end{cases}
\end{equation}
where $\mathcal{A}_n=(R\otimes Q_1)/\mathcal{I}_n$ and $\mathcal{I}_3$ is the two-sided ideal of  $R\otimes Q_1$ generated by $n(R\otimes Q_1)$ and $[R\otimes Q_1,R\otimes Q_1]$. 

Now, $2$ is invertible in $\Bbbk$, so $[1\otimes\nu,1\otimes\nu]=2(1\otimes1)\in[R\otimes Q_1, R\otimes Q_1]$ is a unit in $R\otimes Q_1$. i.e., $\mathcal{A}_2=\mathcal{A}_3=0$. By Theorem~\ref{maintheorem},
$$\mathrm{H}_2(\mathfrak{sq}_n(R))\cong\mathrm{H}_2(\mathfrak{sl}_n(R\otimes Q_1))\cong\mathrm{HC}_1(R\otimes Q_1)\cong\mathrm{HC}_1(R)\otimes\Bbbk^{0|1}.$$
\end{proof}

\begin{remark}
The Steinberg Lie superalgebra $\mathfrak{st}_3(R)$ and $\mathfrak{st}_4(R)$ might fail to be centrally closed for some unital associative superalgebra $R$. Nonetheless, $\mathfrak{st}_3(R\otimes Q_1)$ and $\mathfrak{st}_4(R\otimes Q_1)$ are always centrally closed for arbitrary unital associative superalgebra $R$.
\end{remark}

Now, we apply the above results to a couple of examples.

In the case where $R$ is super commutative. $\mathfrak{sq}_n(R)\cong\mathfrak{sq}_n(\Bbbk)\otimes R$ and $\mathrm{HC}_1(R)=\Omega^1(R)/\mathrm{d}\Omega^1(R)$ is the module of K\"{a}hler differentials modulo exact ones. Hence,
$$\mathrm{H}_2(\mathfrak{sq}_n(\Bbbk)\otimes R)=\Omega^1(R)/\mathrm{d}(R)\otimes\Bbbk^{0|1},\text{ for }n\geqslant3.$$
which has been proved in \cite{NY2020} by explicitly creating the 2-cocycle and the corresponding universal central extension. It was also been proved in \cite{MikhalevPinchuk2005} by introducing the so-called Steinberg queer Lie superalgebra. 

Note that $\mathfrak{sq}_n(\Bbbk)$ for $n\geqslant 3$ is not a simple Lie superalgebra, it contains a non-trivial center consisting of scalar matrices $\Bbbk I_{n|n}$, where $I_{n|n}$ is the identity matrix.  The quotient $\mathfrak{psq}_n(\Bbbk)=\mathfrak{sq}_n(\Bbbk)/\Bbbk I_{n|n}$ is then a simple Lie superalgebra if $\Bbbk$ is an algebraic closed field of characteristic zero and $n\geqslant 3$. Similarly, if $R$ is super-commutative, the scalar matrices with entries in $R$ is central in $\mathfrak{sq}_n(R)$ and $\mathfrak{psq}_n(\Bbbk)\otimes R=\mathfrak{sq}_n(R)/(RI_{n|n})$. For $n\geqslant3$,
$$\mathrm{H}_2(\mathfrak{psq}_n(\Bbbk)\otimes R)=R\oplus(\mathrm{HC}_1(R)\otimes\Bbbk^{0|1}),$$
which is the result given in \cite{IoharaKoga2005}.

Finally, we suppose that $\Bbbk$ contains $\sqrt{-1}$ and consider $R=S\otimes Q_1$, where $S$ is an arbitrary unital associative superalgebra. We have already known from Example~\ref{ex:qtogl} that $\mathfrak{sq}_n(R)\cong\mathfrak{sl}_{n|n}(S)$. Hence, we conclude that
$$\mathrm{H}_2(\mathfrak{sl}_{n|n}(S))\cong\mathrm{H}_2(\mathfrak{sq}_n(S\otimes Q_1))\cong\mathrm{HC}_1(S\otimes Q_1)\otimes\Bbbk^{0|1}\cong\mathrm{HC}_1(S)\otimes\Bbbk^{0|1}\otimes\Bbbk^{0|1}\cong\mathrm{HC}_1(S),$$
for $n\geqslant3$. This is part of the result obtained in \cite{ChenSun2015} and \cite{GL2017}. 

\section*{Acknowledgments}
Zhihua Chang was supported by National Natural Science Foundation of China (No. 11771455 and 12071150) and Guangdong Basic and Applied Basic Research Foundation 2020A1515011417. Yongjie Wang  also thanks the support of National Natural Science Foundation of China (No. 11901146 and 12071026).

\end{document}